\newcommand{\Zz}{\mathbb{Z}}
\newcommand{\Cc}{\mathbb{C}}
\newcommand{\Pp}{\mathbb{P}}
\newcommand{\Rr}{\mathbb{R}}
\newcommand{\Qq}{\mathbb{Q}}
\newcommand{\Nn}{\mathbb{N}}
\newcommand{\spec}{\operatorname{Spec}}
\newcommand{\pet}{\operatorname{pet}}
\newcommand{\PET}{\operatorname{PET}}
\newcommand{\Supp}{\operatorname{Supp}}
\newcommand{\Oo}{\mathcal{O}}
\newtheorem{theorem}{Theorem}[section]
\newtheorem{lemma}[theorem]{Lemma}
\newtheorem{proposition}[theorem]{Proposition}
\newtheorem{definition}[theorem]{Definition}
\newtheorem{example}[theorem]{Example}
\newtheorem{remark}[theorem]{Remark}
\newtheorem{conjecture}[theorem]{Conjecture}
\numberwithin{equation}{section}
\begin{document}

\title[Fujita's log spectrum conjecture]{Fujita's conjecture on iterated accumulation points of pseudo-effective thresholds}

\begin{abstract}
We show that $k$-th iterated accumulation points of pseudo-effective thresholds of $n$-dimensional varieties are bounded by $n-k+1$.
\end{abstract}

\author{Zhan Li}
\address{Department of Mathematics, Southern University of Science and Technology, 1088 Xueyuan Rd, Shenzhen 518055, China} \email{lizhan@sustech.edu.cn}

\date{\today}

\maketitle

\tableofcontents

\section{Introduction}\label{sec: introduction}

We work over complex numbers. The purpose of this paper is to establish a version of Fujita's log spectrum conjecture on iterated accumulation points of pseudo-effective thresholds. For all triples $(X, \Delta, M)$, where $(X, \Delta)$ is an $n$-dimensional log smooth variety with $\Delta$ a reduced divisor and $M$ an ample Cartier divisor, let $\PET_n$ be the set of the pseudo-effective threshold of $M$ with respect to $(X, \Delta)$ (see Definition \ref{def: pet}). For a subset $S \subseteq \Rr$, let $\lim^1 S$ denote the set of accumulation points of $S$, and let the set of the $k$-th iterated accumulation points of $S$ be $\lim^k S \coloneqq \lim^1(\lim^{k-1} S)$ for any $k \in \Nn$.

\begin{conjecture}[{Fujita's log spectrum conjecture \cite[(3.7)]{Fuj96}}]\label{conj: Fujita's log spectrum conjecture} We have $\lim^{k}(\PET_n) \leq n-k$ for any positive integer $k\leq n$ .
\end{conjecture}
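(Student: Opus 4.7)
The plan is to prove the conjecture by induction on $k$, with the key step being the case $k=1$: I would try to show that every accumulation point of $\PET_n$ can be realized as a pseudo-effective threshold on a variety of dimension at most $n-1$, i.e.\ that (some legitimate interpretation of) $\lim^1(\PET_n)$ is contained in $\PET_{n-1}$. Iterating and invoking the base bound $\max \PET_m \leq m$ in every dimension $m \leq n$ then gives $\lim^k(\PET_n) \leq n-k$. The passage from $k-1$ to $k$ is a parallel reduction one dimension lower.

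For the key step, take a sequence $t_i \in \PET_n$ with $t_i \to t_\infty$ and realize each $t_i$ by a triple $(X_i, \Delta_i, M_i)$. By definition of $\pet$, the divisor $K_{X_i} + \Delta_i + t_i M_i$ lies on the boundary of the pseudo-effective cone, so a $(K_{X_i}+\Delta_i+t_iM_i)$-MMP with scaling by $M_i$ terminates with a Mori fiber space $f_i \colon X_i' \to Z_i$, where $\dim Z_i \leq n-1$ and $K_{X_i'}+\Delta_i'+t_iM_i' \equiv_{f_i} 0$. I would then apply the canonical bundle formula to $f_i$ to descend to a generalized pair on $Z_i$ in which $t_i$ is the pseudo-effective threshold with respect to the pushforward of $M_i'$. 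Boundedness of Fano fibers (BAB) puts the generic fibers in a bounded family, and ACC for lc thresholds together with Birkar's boundedness of complements stabilizes the discriminant and moduli parts into a fixed hyperstandard set along a subsequence. Passing to the limit then identifies $t_\infty$ with a pet in dimension $\leq n-1$.

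The main obstacle is preserving the conditions built into the definition of $\PET$ -- log smoothness of $(X,\Delta)$, reducedness of $\Delta$, and ampleness of $M$ as a \emph{Cartier} divisor -- through the MMP, the pushforward, and the limit. After the MFS reduction, $M_i'$ is typically no longer Cartier, its pushforward $f_{i*}M_i'$ is in general only pseudo-effective, and the moduli contribution from the canonical bundle formula is only nef rather than ample Cartier. Encoding all of this as a bona fide triple of the correct form in dimension $n-1$ requires absorbing these defects into the boundary while keeping denominators bounded and taking a log resolution in a uniform manner; this is where boundedness of complements and ACC play a decisive role. The sharper conjectural bound $n-k$, as opposed to the weaker $n-k+1$ that one obtains from more elementary versions of these arguments, demands an unusually tight control on the moduli part and on the shift introduced when passing from a generalized pair on $Z_i$ back to a log smooth triple in $\PET_{n-1}$, and this is the decisive technical difficulty.
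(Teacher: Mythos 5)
This statement is an open conjecture: the paper does not prove it, and explicitly obtains only the weaker bound $\lim^k(\PET_n)\leq n-k+1$ (Theorem \ref{thm: Fujita}). Your proposal does not close that gap. The concrete false step is your base bound ``$\max \PET_m\leq m$ in every dimension $m$'': the correct elementary bound is $m+1$, attained by $\pet(\Pp^m,0;\Oo_{\Pp^m}(1))=m+1$, and more generally Proposition \ref{prop: bound on Fano index} only gives $\tau\leq \dim X'+1$. So even if your key reduction $\lim^1(\PET_n)\subseteq\PET_{n-1}$ (or its generalized-pair analogue) were carried out perfectly and iterated, it would yield $\lim^k(\PET_n)\leq (n-k)+1$, i.e.\ exactly the theorem the paper proves, not the conjecture. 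Your own closing sentence concedes that the passage from $n-k+1$ to $n-k$ is ``the decisive technical difficulty,'' but that difficulty is precisely the content of the conjecture, and no mechanism for resolving it is offered.

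There is also a structural reason your framework cannot reach $n-k$: after one round of MMP, adjunction, and the canonical bundle formula, the boundary coefficients inevitably leave $\{1\}$ and land in DCC sets of the form $D(I)\cup D_c(I)$, and the nef part becomes a generalized-pair datum. In that generality the bound $n-k+1$ is \emph{sharp}: Example \ref{eg: the bound is sharp} exhibits, for $I=\{\frac{l-1}{l}\}$, iterated accumulation points of $\PET_n(I)$ filling out values up to $n-k+1$. Hence any proof of the conjectured bound $n-k$ must exploit the reducedness of the original $\Delta$ (i.e.\ $I=\{1\}$) in a way that survives the dimensional reduction, which your argument---like the paper's---does not do. The remaining ingredients you list (MMP with scaling to a Mori fiber space, ACC, BAB, boundedness of complements) are indeed the tools the paper uses to prove $\lim^k(\PET_n(I))\subseteq K_{n-k}(I)$, so your outline reproduces the known weaker result rather than proving the statement as posed.
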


Here $\lim^{k}(\PET_n) \leq n-k$ means that each element in $\lim^{k}(\PET_n)$ is less or equal to $n-k$. In this paper, we obtain an upper bound which is $1$ bigger than the conjectured one.

\begin{theorem}\label{thm: Fujita}
We have $\lim^{k}(\PET_n) \leq n-k+1$ for any positive integer $k\leq n$.
\end{theorem}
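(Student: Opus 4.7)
My plan is to induct on the dimension $n$, treating the full statement (all $k \le n$) as the inductive assumption. The base case $n=1$ is essentially elementary: for a smooth curve $X$ with reduced boundary $\Delta$ and ample Cartier $M$, the value $\pet_M(X,\Delta)$ is a rational number whose denominator divides $\deg M$ and whose numerator is controlled by $g(X)$, so $\PET_{1}$ is locally finite and $\lim^{1}(\PET_{1}) = \emptyset \le 1$.

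For the inductive step, fix $n \ge 2$, assume the theorem in all smaller dimensions, and suppose for contradiction that some $t^{\ast} \in \lim^{k}(\PET_{n})$ satisfies $t^{\ast} > n-k+1$. Unwinding iterated accumulation produces a $k$-nested system of $n$-dimensional triples whose $\pet$-values realize an order-$k$ accumulation at $t^{\ast}$. The aim is to transfer this entire nested accumulation, level by level, into an order-$k$ accumulation in $\PET_{n-1}$ at the shifted value $t^{\ast} - 1$. The inductive hypothesis applied in dimension $n-1$ then yields $t^{\ast} - 1 \le (n-1) - k + 1 = n - k$, contradicting the assumption $t^{\ast} > n-k+1$.

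The bridge between dimensions is a Bertini restriction together with adjunction. Given $(X,\Delta,M)$ in the nested family, choose a general $H \in |M|$; in characteristic zero $H$ is smooth, $(H,\Delta|_{H})$ is again log smooth with reduced boundary, and $M|_{H}$ is ample Cartier. Adjunction gives
\[
K_{H} + \Delta|_{H} + s\, M|_{H} \;=\; (K_{X} + \Delta + (s+1)M)\big|_{H},
\]
so restriction of pseudo-effective classes yields the easy inequality
\[
\pet_{M|_{H}}(H,\Delta|_{H}) \;\le\; \pet_{M}(X,\Delta) - 1.
\]
The apparent $+1$ in the theorem's bound $n-k+1$ (as opposed to Fujita's conjectural $n-k$) corresponds exactly to this adjunction shift.

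The principal obstacle is the reverse inequality: restriction on $N^{1}$ need not carry the boundary of $\overline{\mathrm{Eff}}(X)$ into the boundary of $\overline{\mathrm{Eff}}(H)$, so $\pet_{M|_{H}}$ could a priori drop strictly below $\pet_{M} - 1$, and then the induced dimension-$(n-1)$ sequence might fail to preserve the order-$k$ accumulation at $t^{\ast}-1$. My strategy is to exploit the largeness $t^{\ast} > n-k+1$: when $\pet_{M}(X,\Delta)$ is large the boundary class $K_{X} + \Delta + \pet \cdot M$ is $M$-dominated, and its restriction to general $H$ should remain on the boundary of $\overline{\mathrm{Eff}}(H)$ because $M|_{H}$ is still ample and the adjunction shift is exactly one unit of $M$. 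Making this precise---most likely via a Zariski-type decomposition or a numerical-dimension analysis of $K_{X}+\Delta+\pet M$ along general Bertini sections, combined with a compactness/subsequence argument that preserves the nested accumulation through each of the $k$ levels---is the heart of the proof and the step requiring the most care.
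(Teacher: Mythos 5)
Your reduction to dimension $n-1$ via a general member $H\in|M|$ has a genuine gap at exactly the point you flag, and I do not believe it can be closed along the lines you sketch. Nothing forces the restricted thresholds to stay near $\pet(X,\Delta;M)-1$, and in general they do not: take $X=\Pp^1\times\Pp^1$, $\Delta=0$, $M=\Oo(1,a)$. Then $\pet(X,0;M)=2$, while a general $H\in|M|$ is a rational curve with $\deg M|_H=2a$, so $\pet(H,0;M|_H)=1/a$, which tends to $0$ rather than to $\pet-1=1$. The mechanism behind the drop is that $K_X+\Delta+\pet\cdot M$ can lie on the boundary of the pseudo-effective cone only ``along a fibration direction,'' and a Bertini section of $|M|$ does not see that fibration; this degenerate-fibration situation is precisely the hard case of the theorem, and your proposed remedies (Zariski-type decomposition, numerical dimension along sections, compactness of the nested system) are not developed enough to rule it out --- the loss is geometric, not a subsequence issue. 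Two smaller problems: an ample Cartier divisor need not have a smooth member ($|M|$ can even be empty, e.g.\ a general degree-one class on a genus-two curve), so the Bertini step is not available without passing to $|mM|$, which destroys the $+1$ numerics; and the base case is misstated, since $0$ is an accumulation point of $\PET_1$ (take $M$ of degree $d\to\infty$ on $\Pp^1$), although the bound $\le 1$ of course still holds there.

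For comparison, the paper does not induct by hyperplane sections at all. It first reduces (Theorem \ref{thm: accumulation point of pet}, a modification of \cite{HL18b}) the $k$-th iterated accumulation points to thresholds of generalized polarized pairs $(X',B'+\tau M')$ of dimension $\le n-k$ with $\rho(X')=1$ and $K_{X'}+B'+\tau M'\equiv 0$, using MMP, adjunction onto lc places, Birkar's boundedness theorem and ACC for generalized lc thresholds; there the nef part is a pushforward of a semi-ample Cartier divisor $M=\phi_M^*M_Z$. The bound $\tau\le\dim X'+1$ is then proved by an Euler-characteristic and Kawamata--Viehweg argument when $M$ is big (Proposition \ref{prop: bound on Fano index}), and in the genuinely fibered case by Nakayama's subadditivity of numerical dimensions (Theorem \ref{thm: Nakayama} and Proposition \ref{prop: num Iitaka}), which transfers non-pseudo-effectivity down to the base $Z$, where $M_Z$ is big and the Fano-index bound applies. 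That last step is the correct substitute for the ``reverse inequality'' your sketch is missing.
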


In fact, the above result is a direct consequence of the following more general statement which allows the coefficients of the boundary divisor $\Delta$ to vary in a DCC set (not just $1$). 

\begin{theorem}\label{thm: main}
Let $I \subseteq [0,1]$ be a DCC set such that $1$ is the only possible accumulation point of $I$, then $\lim^k (\PET_n(I)) \leq n-k+1$ for any positive integer $k\leq n$.
\end{theorem}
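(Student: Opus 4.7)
The plan is to argue by induction on the dimension $n$, handling all values of $k$ at once. The base case $n=1$ reduces to an elementary computation on smooth curves: for $(X,\Delta)$ of dimension one we have
$$
\pet(X,\Delta,M) = \max\bigl\{0,\,(2-2g-\deg\Delta)/\deg M\bigr\},
$$
and a direct analysis using the DCC hypothesis on $I$ and the fact that $\deg M \in \Nn$ gives $\lim^1 \PET_1(I)\le 1$.

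For the inductive step, fix $n\ge 2$ and assume $\lim^{j}\PET_{n-1}(I')\leq n-j$ for every DCC set $I'\subseteq [0,1]$ whose only possible accumulation point is $1$ and every $1\le j\le n-1$. A second (inner) induction on $k$ is then carried out. The case $k=1$, claiming $\lim^1\PET_n(I)\le n$, is treated separately via a boundedness argument: if $\pet(X_i,\Delta_i,M_i)\to c>n$, then the pairs $(X_i,\Delta_i)$ sit in a bounded family (Fano-type boundedness \`a la Birkar), and an ACC statement for $\pet$ on the bounded family rules out accumulation strictly above $n$. For $k\ge 2$, suppose for contradiction that $c\in\lim^k\PET_n(I)$ with $c>n-k+1$; the aim is to exhibit a value $c'\ge c$ in $\lim^{k-1}\PET_{n-1}(I')$ for an auxiliary DCC set $I'\subseteq[0,1]$ still accumulating only at $1$. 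The outer inductive hypothesis then gives $c\le c'\le (n-1)-(k-1)+1 = n-k+1$, contradicting $c>n-k+1$.

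To perform the dimension reduction, use a diagonal selection to extract a sequence of pairs $(X_i,\Delta_i,M_i)$ with $\pet_i\coloneqq\pet(X_i,\Delta_i,M_i)\to c$ in which each $\pet_i$ itself lies in $\lim^{k-1}\PET_n(I)$. On each $X_i$ run a $(K_{X_i}+\Delta_i)$-MMP with scaling of $M_i$: termination is standard, the value $\pet_i$ is preserved, and on the output $K+\Delta+\pet_i M$ becomes nef. Because $1$ is an accumulation point of $I$, after passing to a subsequence and performing a small modification (tracked by an auxiliary DCC set still accumulating only at $1$) one arranges that a prime divisor $S$ of coefficient exactly $1$ appears in the boundary. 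Apply Koll\'ar's adjunction $(K_X+\Delta)|_S = K_S+\Delta_S$: the coefficients of $\Delta_S$ lie in a DCC set $I'\subseteq[0,1]$ depending only on $I$ and $n$, with $1$ again the only accumulation, and $M|_S$ remains ample Cartier. The decisive claim is the monotonicity
$$
\pet(S,\Delta_S,M|_S)\ge\pet(X,\Delta,M),
$$
which then produces the required value $c'\ge c$ as an element of $\PET_{n-1}(I')$ at the same iterated-accumulation depth.

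The main obstacle is precisely this monotonicity under restriction. Unlike the log canonical threshold, the pseudo-effective threshold does not carry an \emph{a priori} useful monotonicity under restriction to a divisor, and one must lift pseudo-effective $\Rr$-classes from $S$ back to $X$. This is expected to follow by combining the MMP output (on which $(K+\Delta+\pet M)|_S$ is already nef) with an extension-of-sections theorem in the spirit of the Hacon--McKernan extension results, possibly after semistable reduction along $S$. A secondary technical issue is the careful propagation of the iterated-accumulation structure through the witnessing sequences and through the perturbations of coefficients; this is handled by a standard diagonal argument on nested collections of witnessing sequences, with attention to preserving the $(k-1)$-fold accumulation depth at each step.
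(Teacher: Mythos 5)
There is a genuine gap, and it sits exactly at the point you flag as ``the main obstacle.'' Your entire dimension reduction rests on the monotonicity $\pet(S,\Delta_S,M|_S)\ge\pet(X,\Delta,M)$ for a coefficient-one component $S$, i.e.\ on lifting pseudo-effectivity of $K_S+\Delta_S+tM|_S$ to $K_X+\Delta+tM$. This is an extension problem of Demailly--Hacon--P\u{a}un type; it is not a consequence of the MMP output being nef along $S$, and no known extension theorem delivers it at the level of generality you need (arbitrary lc pairs, $\Rr$-coefficients, $S$ possibly contained in the relevant base loci). Declaring it ``expected to follow'' leaves the proof without its central step. A second, equally serious gap is the reduction to the existence of such an $S$: the hypothesis is only that $1$ is the only \emph{possible} accumulation point of $I$ (e.g.\ $I=\{1\}$ for Theorem 1.2), and there is no reason the witnessing pairs acquire a coefficient-one component after an MMP. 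The case your argument silently drops --- the MMP terminating in a Mori fiber space over a positive-dimensional base with no lc place to restrict to --- is precisely the ``fibration case'' that the paper identifies as the obstruction in \cite{HL18b}. Finally, your $k=1$ step is also unsupported: lc pairs with $\pet>n$ are not a bounded family without an $\epsilon$-lc hypothesis, and the ACC for $\PET_n$ does not by itself cap the accumulation points by $n$.

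The paper takes a different route that avoids both problems. It first invokes the machinery of \cite{HL18b} (Theorem \ref{thm: accumulation point of pet}) to show $\lim^k(\PET_n(I))\subseteq K_{n-k}(I)$, i.e.\ every $k$-th iterated accumulation point $\tau$ is realized by a numerically trivial g-klt generalized pair $(X',B'+\tau M')$ of Picard number one in dimension $\le n-k$, with $M'$ pushed forward from a semi-ample Cartier divisor $M$. The bound $\tau\le\dim X'+1$ is then proved directly on this object: when $M$ is nef and big, by the Euler-characteristic and Kawamata--Viehweg vanishing argument of Proposition \ref{prop: bound on Fano index}; when $M$ defines a genuine fibration $X\to Z$, by Nakayama's subadditivity of numerical dimensions (Theorem \ref{thm: Nakayama}, via Proposition \ref{prop: num Iitaka}) to push the pseudo-effective threshold down to $Z$ and then apply Proposition \ref{prop: bound on Fano index} there. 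If you want to salvage your outline, you would need to replace the restriction-monotonicity step by an argument of this descent-to-the-base type, since descending along the fibration defined by $M$ (rather than restricting to a boundary divisor) is what makes the pseudo-effective threshold controllable.
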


For the meaning of the DCC set see Section \ref{sec: preliminaries}, and for the definition of $\PET_n(I)$ see \eqref{eq: pet(I)}. Under the above conditions, the upper bound is sharp and the assumption that $1$ is the only possible accumulation point of $I$ cannot be removed. In fact, for $i \in \Nn$, consider $X_i=\Pp^1$ and $K_{X_i}+B_i+c_iM_i \equiv 0$. Suppose that $M_i $ is a closed point and $B_i$ is a closed point with coefficient $\frac{i-1}{i}$, then $c_i = \frac{i+1}{i}$ whose accumulation point is $1$. On the other hand, if we choose the coefficient of $B_i$ to be $\frac{i-1}{2i}$ (hence $\frac 1 2$ is an accumulation point of $I$), then $c_i =\frac{3i+1}{2i}$ and the accumulation point of $c_i$ is $\frac 3 2>1$. This example can be generalized to any $n$ and $0<k \leq n$ (see Example \ref{eg: the bound is sharp}). Previously, such result is only known for $k=n-1, n$ (\cite[Proposition 1.3]{HL18b}) which more or less corresponds to the surface case.

\medskip

Finally, we say a few words about the history of Fujita's log spectrum conjecture, in particular those are pertinent to the current work. Fujita proposed two conjectures on the behavior of pseudo-effective thresholds (see \cite{Fuj92} and \cite[(3.2) (3.7)]{Fuj96}) which are analogies to the Shokurov's conjecture on log canonical thresholds. One conjecture predicts that the set of pseudo-effective thresholds is ACC, the other conjecture is Conjecture \ref{conj: Fujita's log spectrum conjecture}. The first conjecture has been settled affirmatively in \cite{DiC16, DiC17}. Besides, there is a series of related works from adjunction theory (see \cite{Fuj90, BS95}). 

\medskip

In \cite{HL17}, we view the first conjecture from the perspective of generalized polarized pairs and establish a result for a wider class of varieties. In some sense, such result is optimal. The advantage of this new perspective is that it automatically takes care of the singularities introduced by the testing divisors. Towards Conjecture \ref{conj: Fujita's log spectrum conjecture}, the same perspective is adopted in \cite{HL18b} combining with the observation that there is a striking similarity between pseudo-effective thresholds and log canonical thresholds. Hence, there is no surprise that the induction result in \cite{HL18b} is established parallelly to that in \cite{HMX14}. However, in order to get the full statement as Theorem \ref{thm: main}, there is a difficulty in the fibration case. \cite{HL18b} was submitted on May 2018 without going to the arXiv. The new idea of the current work is to use Nakayama's subadditivity on numerical dimensions to tackle the fibration case (see Theorem \ref{thm: Nakayama}). Combining with a modified induction result of \cite{HL18b}, we are eventually able to establish Theorem \ref{thm: Fujita} and Theorem \ref{thm: main}. 

\medskip

The paper is organized as follows. In Section \ref{sec: preliminaries}, we give background materials. Theorem \ref{thm: Fujita} and Theorem \ref{thm: main} are proven in Section 
\ref{sec: proof of thm}.
\medskip

\noindent\textbf{Acknowledgements}.
This paper is a continuation of the previous joint work with Jingjun Han \cite{HL18b}. The author thanks Chen Jiang for simplifying the original argument of Proposition \ref{prop: bound on Fano index}. This work is partially supported by NSFC Grant No.11601015 and a starting grant from SUSTech.

\section{Preliminaries}\label{sec: preliminaries}
Throughout the paper, $\Zz$ and $\Nn$ denote the set of integers and the set of positive integers respectively. 

\subsection{Pseudo-effective thresholds, ACC/DCC sets.}

\begin{definition}[Pseudo-effective threshold]\label{def: pet}
If $M$ is an $\Rr$-Cartier divisor, we define the \emph{pseudo-effective threshold} of $M$ with respect to $(X, \Delta)$ to be
\[
\pet(X, \Delta; M) \coloneqq \inf\{t \in \Rr_{\geq 0} \mid K_X+\Delta+tM \text{~is effective}\}.
\] 
\end{definition}

By convention, $\pet(X, \Delta; M) = +\infty$ if $K_X+\Delta+t M$ is not pseudo-effective for any $t \in \Rr_{\geq 0}$.

\medskip

A set of real numbers is ACC (resp. DCC) if it satisfies the ascending chain condition (resp. descending chain condition). Let $I \subseteq [0,1]$, we define
\begin{equation}\label{eq: pet(I)}
\begin{split}
\PET_n(I) \coloneqq \{ & \pet(X, \Delta; M) \mid (X, \Delta) \text{~is lc}, \text{~coefficients of }\Delta \text{~are in~} I, \\
&M \text{~is an ample Cartier divisor}, \dim X=n\}.
\end{split}
\end{equation}

\begin{remark}
In the definition of $\PET_n(I)$ in \cite{HL18b}, we only require that $M$ is a \emph{nef and big} Cartier divisor. Here we need \emph{ampleness} of $M$ to exploit the fibration defined by $M$. Nevertheless, it is still curious to know if Theorem \ref{thm: main} still holds in the nef and big setting.
\end{remark}

\subsection{Generalized polarized pairs.}

\begin{definition}[{Generalized polarized pair, \cite[Definition 1.4]{BZ16}}]
A generalized polarized pair $(X'/Z, B'+M')$ consists of a normal variety $X'$ equipped with projective morphisms 
\[
X \xrightarrow{f}X' \to Z,
\] where $f$ is birational and $X$ is normal, an $\Rr$-boundary $B' \geq 0$, and an $\Rr$-Cartier divisor $M$ on $X$ which is nef$/Z$ such that $K_{X'}+B'+M'$ is $\Rr$-Cartier, where $M'\coloneqq f_*M$. We call $B'$ the boundary part and $M$ the nef part.
\end{definition}

From the definition, we see that $X$ could be replaced with any log resolution over $X$, and $M$ could be replaced with the pullback of $M$ accordingly. We can define the generalized log discrepancy of a divisor $E$ over $X'$ by considering a high enough model $X$ which contains $E$ (say a resolution as above). Let
\[
K_X+B+M=f^*(K_{X'}+B'+M'),
\] then the \emph{generalized log discrepancy} of $E$ is defined as (\cite[Definition 4.1]{BZ16})
\begin{equation}\label{eq: g-log discrepancy}
a(E,X',B' + M') = 1-{\rm mult}_EB.
\end{equation} We say that $(X',B'+M')$ is generalized log canonical (resp. generalized kawamata log terminal) if the generalized log discrepancy of any prime divisor is $\geq 0$ (resp. $>0$). For simplicity, we write g-lc (resp. g-klt) for generalized log canonical (resp. generalized kawamata log terminal).

\subsection{The sets $\mathfrak{N}_n(I,c), \mathfrak{K}_n(I,c)$ and $N_n(I), K_n(I)$.}

Let $I \subseteq [0,1]$, we define
\begin{equation}\label{eq: I_+}
I_+ \coloneqq \{\sum n_i a_i \leq 1 \mid a_i \in I, n_i \in \Nn\}.
\end{equation} For a divisor $\Delta$, we write $\Delta \in I$ if the coefficients of $\Delta$ lie in $I$. Because the coefficient set $I$ may change after adjunctions, for $c\in \Rr_{\geq 0}$, we define
\begin{equation}\label{eq: coefficients of adjunction}
\begin{split}
&D(I) \coloneqq \{\frac{m-1+f}{m} \leq 1 \mid m \in \Nn, f \in I_+\}, and\\
&D_c(I) \coloneqq \{\frac{m-1+f+kc}{m} \leq 1 \mid m, k \in \Nn, f \in I_+\}.
\end{split}
\end{equation}

Suppose that $(X', B'+M')$ is a generalized polarized pair with data $X \xrightarrow{f} X' \to \spec \Cc$ and $M$ the nef part ($M$ may not be effective), where $f_*M=M'$. Let $c\in \Rr_{\geq 0}$, then $(X', B'+cM')$ is said to satisfy condition ($\dagger$) if the following properties hold:

\medskip

\begin{quote}
~($\dagger$)  
\medskip
\begin{enumerate}
\item $(X', B' + cM')$  is g-lc,
\item $K_{X'}+B' + cM' \equiv 0$, and  $B' \in D(I) \cup D_c(I)$, 
\item $M' = f_*M$ with $M$ semi-ample and Cartier, and $M'$ $\Qq$-Cartier,
\item suppose that $\phi_M: X \to Z$ is the fibration defined by a sufficiently divisible multiple of $M$, then $M=\phi_M^*M_Z$ is a pullback of an ample \emph{Cartier} divisor $M_Z$, and
\item if $M \equiv 0$, then at least one coefficient of $B'$ lies in $D_c(I)$.
\end{enumerate} 
\end{quote}

\begin{remark}
In \cite{HL18b}, we only require $M$ to be \emph{nef} in (3), and (4) is a newly added assumption in order to guarantee the induction argument.
\end{remark}

Recall the following sets defined in \cite{HL18b} under the above modified definition of condition $(\dagger)$. 

\begin{equation}\label{eq: set N}
\begin{split}
\mathfrak{N}_n(I,c) = &\{(X', B' + cM') \mid \dim X =n, \\
&\text{~and~} (X', B' + cM') \text{~satisfies condition ($\dagger$)} \}.
\end{split}
\end{equation}

Notice that when $M \equiv 0$ (or equivalently $M'\equiv 0$), this generalized polarized pair is just the lc pair $\mathfrak{N}_n(I,c)$ defined in \cite{HMX14} Page 559. In the same fashion, we define
\[
\begin{split}
\mathfrak{K}_n(I,c) = &\{(X', B' + cM') \mid (X', B' + cM') \in \mathfrak{N}_n(I,c)  \\
&\text{~is g-klt, and~} \rho(X')=1\}.
\end{split}
\] The corresponding subsets of real numbers are
\begin{equation}\label{eq: sets}
\begin{split}
N_n(I) =  &\{c \in \Rr \mid \mathfrak{N}_m(I,c) \neq \emptyset, m \leq n \},\\
K_n(I) =  &\{c \in \Rr \mid \mathfrak{K}_m(I,c) \neq \emptyset, m \leq n \}.
\end{split}
\end{equation} Notice that in $N_n(I)$ and $K_n(I)$, we also consider varieties of dimensions less than $n$.

\subsection{Subadditivity of numerical dimensions.}

Let $D$ be an $\Rr$-divisor, recall that $\kappa_\sigma(D)$ is the Nakayama's numerical dimension of $D$ (\cite[\S V Definition 2.5]{Nak04}) which is defined by
\[
\kappa_\sigma(D) \coloneqq \max\{\sigma(D; A) \mid A \text{~is a divisor}\},
\] where $\sigma(D; A)=-\infty$ if $H^0(X, A+\lfloor mD\rfloor) \neq 0$ only for finitely many $m \in \Nn$, otherwise
\[
\sigma(D; A) \coloneqq \max\{k \in \Zz_{\geq 0} \mid \overline{\lim}_{m \to \infty} m^{-k}h^0(X, A+ \lfloor mD\rfloor) > 0) \}.
\]  Besides, one defines
\[
\kappa_\sigma(D; X/Y) \coloneqq \kappa_\sigma(D|_{X_z})
\] on the fiber $X_z$, where $z \in Z$ is a general point. Here we assume that $X \to Z$ has connected fibers. In general, one considers a general fiber of its Stein factorization (see \cite[\S V Notation 2.24]{Nak04}). The following lemma is well-known.

\begin{lemma}\label{le: pseudo-effective}
Suppose that $X$ is a smooth projective variety and $D$ is an $\Rr$-Cartier divisor. Then $\kappa_\sigma(D) = -\infty$ iff $D$ is not pseudo-effective.
\end{lemma}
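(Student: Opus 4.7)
The plan is to prove the two directions of the equivalence separately. For the direction $\kappa_\sigma(D)\geq 0 \Rightarrow D$ pseudo-effective --- the contrapositive of the stated ``only if'' --- I would use a limit argument in $N^1(X)_\Rr$. By the definition of $\kappa_\sigma$, there exist a Cartier divisor $A$ and an increasing sequence $m_i\to\infty$ with $h^0(X,\,A+\lfloor m_iD\rfloor)>0$, so each $A+\lfloor m_iD\rfloor$ is linearly equivalent to some effective $E_i\geq 0$. The fractional parts $\{m_iD\}$ have coefficients in $[0,1)$ supported on $\Supp(D)$, hence are bounded in $N^1(X)_\Rr$, so the normalized classes
\[
\tfrac{1}{m_i}[E_i]=\tfrac{1}{m_i}[A]+[D]-\tfrac{1}{m_i}[\{m_iD\}]
\]
converge to $[D]$ as $i\to\infty$; closedness of the effective cone then forces $[D]\in\overline{\mathrm{Eff}}(X)$, so $D$ is pseudo-effective.

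For the converse $D$ pseudo-effective $\Rightarrow \kappa_\sigma(D)\geq 0$, the key input I would exploit is that ample plus pseudo-effective is big: for any ample Cartier divisor $H$ and any $\epsilon>0$, $D+\epsilon H$ is big. Thus $D+H$ is a big $\Rr$-Cartier divisor, and by the definition of bigness,
\[
h^0\bigl(X,\,kH+\lfloor kD\rfloor\bigr)=h^0\bigl(X,\,\lfloor k(D+H)\rfloor\bigr)>0
\]
for all $k\geq k_0$. To repackage this into non-vanishing of $h^0(X,\,A+\lfloor mD\rfloor)$ for a \emph{single} fixed Cartier divisor $A$ and infinitely many $m$, I would take $A$ sufficiently very ample that $A-\{mD\}$ remains ample as an $\Rr$-divisor for every $m$ (possible because the fractional parts lie in a bounded polytope supported on the finitely many prime divisors in $\Supp(D)$). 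Then the $\Rr$-Cartier divisor
\[
A+\lfloor mD\rfloor=(A-\{mD\})+mD
\]
is ample plus pseudo-effective, hence represents a big class, for every $m\geq 1$.

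The main obstacle, and the step that requires real care, is bridging the gap between bigness of the $\Rr$-Cartier \emph{class} of $A+\lfloor mD\rfloor$ and the actual non-vanishing of $h^0(X,\,A+\lfloor mD\rfloor)$ for infinitely many $m$, since bigness of a class only guarantees sections of sufficiently high multiples. My plan to overcome this is to invoke Nakayama's foundational monotonicity results for $\sigma(D;A)$ from \cite[Chapter V]{Nak04} (in particular $\sigma(D;A)\leq \sigma(D;A+P)$ for $P$ pseudo-effective, together with his comparison of $\sigma(D;A)$ for different ample $A$), which let one transfer the non-vanishing from the family $kH+\lfloor kD\rfloor$ (where the ample part grows together with $k$) to a single sufficiently ample $A$. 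This is where the careful asymptotic bookkeeping in Nakayama's framework does the real work, and from which the lemma follows directly.
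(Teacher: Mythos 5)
Your first direction ($\kappa_\sigma(D)\geq 0\Rightarrow D$ pseudo-effective) is correct and complete: the normalized classes $\tfrac{1}{m_i}[E_i]$ do converge to $[D]$ because the fractional parts $\{m_iD\}$ are bounded, and closedness of $\overline{\mathrm{Eff}}(X)$ finishes it. This is in fact more self-contained than the paper, which handles both directions at once by citing \cite[\S V Remark 2.6]{Nak04} for the statement that $\kappa_\sigma^-(D)=-\infty$ iff $D$ is not pseudo-effective, and then only observes that $\kappa_\sigma(D)=-\infty$ is definitionally equivalent to $\kappa_\sigma^-(D)=-\infty$.

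The converse direction, however, has a genuine gap, and it sits exactly where you flagged it. You need a \emph{single} divisor $A$ with $h^0(X,A+\lfloor mD\rfloor)\neq 0$ for infinitely many $m$, but what bigness of $D+H$ hands you is non-vanishing for $kH+\lfloor kD\rfloor$, where the ample part grows with $k$. The tools you propose to bridge this do not do the job: the monotonicity $\sigma(D;A)\leq\sigma(D;A+P)$ for $P$ pseudo-effective only lets you \emph{enlarge} the auxiliary divisor, i.e.\ it transfers non-vanishing in the wrong direction, and Nakayama's comparison of $\sigma(D;A)$ for different ample $A$ presupposes that some $\sigma(D;A)\geq 0$, which is precisely what is at stake. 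Likewise, "$A-\{mD\}$ ample plus $mD$ pseudo-effective" only makes $A+\lfloor mD\rfloor$ a big class, and a big integral divisor can very well have $h^0=0$; bigness only controls $h^0$ of high multiples. The statement you actually need --- for $D$ pseudo-effective and $A$ sufficiently ample, $H^0(X,A+\lfloor mD\rfloor)\neq 0$ for all (or infinitely many) $m$ --- is a genuine theorem of Nakayama (\cite[\S V]{Nak04}, the non-vanishing underlying Remark 2.6), whose proof is not formal: it rests on a vanishing-theorem/Fujita-type argument, not on the numerical manipulations you describe. So either cite that non-vanishing result explicitly (which is in effect what the paper does), or supply its proof; as written, the second half of your argument does not close.
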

\begin{proof}
In \cite[\S V Remark 2.6]{Nak04}, it is shown that $\kappa^-_\sigma(D) = -\infty$ iff $D$ is not pseudo-effective (see \cite[\S V Definition 2.5]{Nak04} for $\kappa^-_\sigma(D)$). Moreover, $\kappa_\sigma(D) = -\infty$ iff for any $A$, $H^0(X, A+\lfloor mD\rfloor) \neq 0$ only for finitely many $m \in \Nn$. By definition, this is equivalent to $\kappa^-_\sigma(D) = -\infty$.
\end{proof}

\begin{theorem}[{\cite[\S V Theorem 4.1(1)]{Nak04}, \cite[(3.3) and (3.4)]{Fuj19}}]\label{thm: Nakayama}
Let $g: Y \to Z$ be a fiber space (with connected fibers) from a normal projective variety into a non-singular projective variety, $\Delta$ an effective $\Rr$-divisor of $Y$ such that $K_Y +\Delta$ is $\Rr$-Cartier and $(Y, \Delta)$ is lc over a non-empty open subset of $Z$. Let $D$ be an $\Rr$-Cartier divisor of $Y$ such that $D-(K_{Y/Z} + \Delta)$ is nef. Then for any $\Rr$-divisor $Q$ of $Z$, 
\begin{equation}\label{eq: kodaia 1}
\kappa_\sigma(D+f^*Q) \geq \kappa_\sigma(D; Y/Z)+\kappa(Q) {\rm~ and}
\end{equation} 
\begin{equation}\label{eq: kodaia 2}
\kappa_\sigma(D+f^*Q) \geq \kappa(D; Y/Z)+\kappa_\sigma(Q),
\end{equation} where $\kappa(-)$ denotes the Kodaira dimension of the divisor.
\end{theorem}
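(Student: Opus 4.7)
The plan is to follow the Viehweg--Nakayama strategy, which combines a relative growth estimate of sections on fibers with a weak positivity statement for direct image sheaves on $Z$ and an absolute growth estimate on $Z$. First, I would fix a very ample divisor $A$ on $Y$ and a very ample divisor $H$ on $Z$ large enough so that, for infinitely many $m \in \Nn$, one has
\begin{equation*}
h^0(Y_\eta,\, (A + \lfloor mD \rfloor)|_{Y_\eta}) \gtrsim m^{\kappa_\sigma(D;\, Y/Z)}
\end{equation*}
on the generic fiber $Y_\eta$ of $g$. By base change for direct images this bounds from below the generic rank of the sheaf $\Ff_m \coloneqq g_*\Oo_Y(A + \lfloor mD \rfloor)$, so one has converted the fiberwise information into information about the rank of a coherent sheaf on $Z$.

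Second, I would invoke a Kawamata--Viehweg--Fujino weak positivity theorem for $\Ff_m$. The hypothesis that $D - (K_{Y/Z}+\Delta)$ is nef, combined with the assumption that $(Y,\Delta)$ is lc over a non-empty open subset of $Z$, is precisely the input required: after twisting by a fixed $\Oo_Z(H)$ that is independent of $m$, the sheaf $\Ff_m \otimes \Oo_Z(H)$ becomes weakly positive in the sense of Viehweg. Concretely, one extracts from weak positivity an inequality
\begin{equation*}
h^0(Z,\, \Ff_m \otimes \Oo_Z(H + \lfloor mQ \rfloor)) \gtrsim \rk(\Ff_m) \cdot h^0(Z,\, \Oo_Z(\lfloor mQ \rfloor))
\end{equation*}
for every sufficiently divisible $m$, after possibly enlarging $H$.

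Third, I would transport these sections back to $Y$ via the projection formula to obtain, for infinitely many $m$,
\begin{equation*}
h^0(Y,\, A + g^*H + \lfloor m(D + g^*Q)\rfloor) \gtrsim m^{\kappa_\sigma(D;\, Y/Z)} \cdot m^{\kappa(Q)},
\end{equation*}
which by the definition of $\kappa_\sigma$ is exactly \eqref{eq: kodaia 1}. For \eqref{eq: kodaia 2} the roles of fiber and base are exchanged: one starts from the growth $h^0(Y_\eta,\, \cdot\,) \gtrsim m^{\kappa(D;\, Y/Z)}$ on the generic fiber (a genuine Kodaira-dimension lower bound produced, for instance, from an Iitaka fibration relative to $Z$), and couples it with the $\kappa_\sigma$ growth of $Q$ on $Z$ through the same weak-positivity dévissage.

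The main obstacle will be establishing the weak positivity of $\Ff_m$ in this generality. Because $D$ is only $\Rr$-Cartier and is not assumed to be a rational multiple of $K_{Y/Z}+\Delta$, and because $(Y,\Delta)$ is only lc rather than klt, I need a semipositivity statement that tolerates both log canonical singularities along non-dominant components and $\Rr$-coefficients; this is precisely the scope of Fujino's refinement of the Kawamata--Viehweg semipositivity theorem cited in the statement. Additional care is required to control the error introduced by the rounding $\lfloor mD\rfloor$ versus $mD$ and, if convenient, to reduce from $\Rr$- to $\Qq$-Cartier input by a small perturbation of $D$ and $Q$ without losing the desired polynomial growth rate.
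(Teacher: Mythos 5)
This theorem is not proved in the paper: it is quoted verbatim as a known result of Nakayama \cite[\S V Theorem 4.1(1)]{Nak04} in the form generalized by Fujino \cite{Fuj19}, so there is no in-paper argument to compare yours against. What you have written is, in outline, a faithful reconstruction of the strategy of those sources: fiberwise growth of $h^0$ bounds the generic rank of $\Ff_m = g_*\Oo_Y(A+\lfloor mD\rfloor)$ from below, a positivity theorem for these direct images converts that rank into sections on $Z$ after a twist, and the projection formula reassembles everything on $Y$. Two points in your sketch are stated more loosely than the argument can afford. First, weak positivity of $\Ff_m\otimes\Oo_Z(H)$ by itself does not yield the section-counting inequality you write down; what is actually needed is a generically surjective (equivalently, generically split injective on a rank-$\rk\Ff_m$ trivial subsheaf) map $\Oo_Z^{\oplus \rk \Ff_m}\to \Ff_m\otimes\Oo_Z(H)$ with $H$ \emph{uniform in $m$}, i.e.\ generic global generation after a fixed twist. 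This is strictly stronger than Viehweg weak positivity (which only controls $\hat S^{ab}\Ff_m\otimes\Oo_Z(bH)$ for $b\gg0$ depending on $a$) and is exactly what Nakayama's $\omega$-sheaf formalism, resp.\ Fujino's semipositivity package, is designed to deliver; since you defer precisely this point to those references, the gap is one of attribution rather than of logic. Second, $\kappa_\sigma$ is a $\limsup$, so the fiberwise estimate holds only for infinitely many $m$ while the base estimate for $\kappa(Q)$ holds for $m$ in a sub-semigroup; you must intersect these index sets (and similarly, with the roles reversed, for \eqref{eq: kodaia 2}) before multiplying the two growth rates. Both issues are handled in the cited proofs, so your proposal is an acceptable account of why the theorem is true, but as written it is a roadmap into \cite{Nak04} and \cite{Fuj19} rather than a self-contained proof.
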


\begin{remark}
See \cite[Remark 3.8]{Fuj17} and \cite{Fuj19} for further discussions on this result.
\end{remark}

\section{Proofs of Theorems}\label{sec: proof of thm}

First, we need the following induction characterization of $k$-th iterated accumulation points of $\PET_n(I)$. This is a modified version of the main result of \cite{HL18b} under current settings. 

\begin{theorem}[{\cite[Theorem 1.2]{HL18b}}]\label{thm: accumulation point of pet}
Let $I \subseteq [0,1]$ be a DCC set such that $I=I_+$. Assume that $1\in I$ with $1$ the only possible accumulation point, then for any $1 \leq k \leq n-1$, $\lim^k (\PET_n(I)) \subseteq K_{n-k}(I)$.
\end{theorem}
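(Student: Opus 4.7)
The plan is to induct on $k$, with the engine being the single-step implication $\lim^1(K_m(I)) \subseteq K_{m-1}(I)$ for every $1 \leq m \leq n$, together with a base reduction $\lim^1(\PET_n(I)) \subseteq K_{n-1}(I)$. Iterating these two inclusions gives
\[
\lim^k(\PET_n(I)) \subseteq \lim^{k-1}(K_{n-1}(I)) \subseteq \cdots \subseteq K_{n-k}(I),
\]
which is the desired conclusion.

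For the base reduction, I would take a strictly monotone sequence $c_i \to c$ in $\PET_n(I)$ realized by triples $(X_i, \Delta_i, M_i)$ with $\dim X_i = n$, $\Delta_i \in I$ and $M_i$ ample Cartier; after a log resolution we may assume $(X_i,\Delta_i)$ is log smooth. At $t = c_i$ the divisor $K_{X_i}+\Delta_i+c_iM_i$ lies on the pseudo-effective boundary, so running a $(K_{X_i}+\Delta_i)$-MMP with scaling of $M_i$ terminates with a Mori fiber space on which $K+\Delta+c_iM$ becomes numerically trivial after contraction. Pushing down via the canonical bundle formula (whose coefficients land in the DCC set $D(I) \cup D_{c_i}(I)$) produces a g-klt pair in $\mathfrak{K}_{m_i}(I,c_i)$; conditions $(\dagger)(3)$--$(4)$ are secured by replacing $M$ with a sufficiently divisible multiple that is basepoint-free and pulled back from the base of the morphism it defines. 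Di Cerbo's ACC for pseudo-effective thresholds \cite{DiC16, DiC17} forces $m_i < n$ eventually, placing $c$ in $K_{n-1}(I)$. The inductive step proceeds analogously: starting from $(X_i', B_i' + c_i M_i') \in \mathfrak{K}_m(I, c_i)$ with $c_i \to c$, either $M_i' \equiv 0$ for infinitely many $i$ -- reducing to the Shokurov-type log canonical threshold setting of \cite{HMX14} with boundary in $D(I) \cup D_c(I)$ -- or $M_i'$ is ample by $\rho(X_i')=1$, and we again run MMP and descend via the canonical bundle formula.

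The main obstacle is the fibration case in the inductive step, where the fibration $\phi_{M_i}:X_i \to Z_i$ from $(\dagger)(4)$ has positive-dimensional generic fiber. Naive fiberwise adjunction would sever the connection between the new boundary and the threshold $c_i$, and this is exactly the gap left open in \cite{HL18b}. The resolution is to invoke Nakayama's subadditivity (Theorem \ref{thm: Nakayama}) applied to $\phi_{M_i}$: combining $K_{X_i} + B_i + c_i M_i \equiv 0$ with $M_i = \phi_{M_i}^* M_{Z_i}$ and Lemma \ref{le: pseudo-effective}, subadditivity of $\kappa_\sigma$ forces a log Calabi--Yau structure on the general fiber which is compatible with the canonical bundle formula on $Z_i$. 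Descending then produces a generalized polarized pair on $Z_i$ of strictly smaller dimension, still satisfying $(\dagger)$ with threshold $c_i$ and boundary coefficients in a DCC set whose only accumulation points are $1$ and possibly $c$; a final MMP restores $\rho=1$. The principal bookkeeping task is verifying that condition $(\dagger)$ -- especially the strengthened semi-ampleness and Cartier pullback clauses $(3)$--$(4)$ -- is preserved under MMP, canonical bundle formula, and the Nakayama-style descent, so that the limiting data genuinely represent an element of $K_{m-1}(I)$ and the induction closes.
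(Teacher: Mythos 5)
Your high-level architecture (iterate a one-step inclusion $\lim^1 K_m(I)\subseteq K_{m-1}(I)$ on top of a base reduction from $\PET_n(I)$) matches the paper, which chains $\lim^k\PET_n(I)\subseteq\lim^k N_n(I)\subseteq N_{n-k}(I)=K_{n-k}(I)$. But the descent mechanism you propose is not the one the paper uses, and it has a genuine gap. You descend by pushing down to the base $Z_i$ via the canonical bundle formula and assert that the resulting boundary lands in $D(I)\cup D_{c_i}(I)$ and that $(\dagger)(3)$--$(4)$ are ``secured by replacing $M$ with a sufficiently divisible multiple.'' This cannot work as stated: the moduli part produced by the canonical bundle formula is only known to be b-nef (b-semi-ampleness is open in general), it is certainly not Cartier, and the discriminant coefficients it produces are not of the divisorial-adjunction form $\frac{m-1+f}{m}$ or $\frac{m-1+f+k c}{m}$ that define $D(I)$ and $D_c(I)$. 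The paper never descends to the base of a fibration this way. Its nef part in $K_{n-k}(I)$ is built only from three actions --- pullback of an ample Cartier divisor, restriction to a general fiber of a fibration, and restriction to a divisor --- and the entire new content of this theorem relative to \cite[Theorem 1.2]{HL18b} is the verification that these three actions preserve the strengthened clauses $(\dagger)(3)$--$(4)$ (namely $M=\phi_M^*M_Z$ with $M_Z$ ample Cartier). The dimension drop in the inductive step comes not from Di Cerbo's ACC but from Birkar's BAB theorem (non-$\epsilon$-lc-ness of the Fanos $X_i'$, extraction of a divisor of small generalized log discrepancy, then adjunction), together with ACC for generalized lc thresholds and the global ACC of \cite{BZ16}.

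You have also misplaced the role of Nakayama's subadditivity. It does not enter the proof of this theorem at all; it is the engine of Proposition \ref{prop: num Iitaka}, which is applied afterwards, in the proof of Theorem \ref{thm: main}, to bound the elements $\tau\in K_{n-k}(I)$ in the case where the semi-ample divisor $M$ induces a fibration with positive-dimensional base. The ``fibration difficulty'' you correctly identify is resolved there, by comparing $\pet(X,\tilde B;M)$ with $\pet(\tilde Z,0;M_{\tilde Z})$ and then invoking Proposition \ref{prop: bound on Fano index} on the base --- not by forcing a log Calabi--Yau structure on the fibers inside the induction that proves $\lim^k(\PET_n(I))\subseteq K_{n-k}(I)$. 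As written, your inductive step neither produces an element of $K_{m-1}(I)$ satisfying condition $(\dagger)$ nor controls the coefficient set, so the induction does not close.
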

\begin{proof}[Sketch of Proof]
The differences between \cite[Theorem 1.2]{HL18b} and the above result lie in the different meanings of $\PET_n(I)$ and $K_{n-k}(I)$. Under our assumptions, $M$ in the definition of $\PET_n(I)$ (see \eqref{eq: pet(I)}) is an ample Cartier divisor instead of just nef and big, and $M$ in the definition of $K_{n-k}(I)$ (see \eqref{eq: sets}) satisfies additional assumptions (3) and (4) in the condition $(\dagger)$. That is, $M$ is a semi-ample Cartier divisor and for the morphism $\phi_M$ defined by $M$, $M=\phi_M^*M_Z$ for an ample Cartier divisor $M_Z$. 

\medskip

However, the argument for \cite[Theorem 1.2]{HL18b} works without any essential changes. The reason is that $M$ in $K_{n-k}(I)$ is obtained by combinations of the following three actions:

\begin{enumerate}[label=(\alph*)]
\item pullback an ample Cartier divisor $A$, 
\item restriction on a general fiber $F$ of some fibration, and
\item restriction on a divisor $S$. 
\end{enumerate}

Suppose that $M' =f^*A$ is a pullback of an ample Cartier divisor $A$ on $Z'$ through $f: X \to Z'$. In case (b), if $F$ is the fiber, then $Z$ of ($\dagger$)(4) is the target of the morphism $\phi_{M'|_F}$ defined by $M'|_F$, which is just the normalization of $f(F)$. Hence $M \coloneqq M'|_F = (\phi_{M'|_F})^*(A|_{Z})$ is semi-ample and Cartier, and $M_Z=A|_{Z}$ is  ample and Cartier. In case (c), $Z$ of ($\dagger$)(4) is the target of the morphism $\phi_{M'|_S}$ defined by $M'|_S$, which is just the normalization of $f(S)$. Hence $M \coloneqq M'|_S=(\phi_{M'|_S})^*(A|_Z)$ is semi-ample and Cartier, and $M_Z=A|_Z$ is ample and Cartier. Hence the additional requirements on $M$ and $M_Z$ preserve under the above three actions.

\medskip

For the readers' convenience, we sketch the proof of \cite[Theorem 1.2]{HL18b}, leaving technical details to the original argument.

\medskip

First, we claim that $N_n(K)=K_n(I)$ (see \cite[Lemma 3.3]{HL18b}).

\begin{proof}[Proof of the Claim] Only ``$\subseteq$'' is non-trivial. For $(X', B'+cM') \in \mathfrak{N}_n(I,c)$, if $M' \equiv 0$, this is just \cite[Lemma 11.4]{HMX14}, hence we can assume that $M' \not\equiv 0$. After taking a generalized dlt modification, we can assume that it is $\Qq$-factorial and $(X', 0)$ is klt. Run a $(K_{X'}+B')$-MMP, then it terminates to a Mori fiber space $f: X'' \to Z''$. If $\lfloor B' \rfloor =0$, we restrict to a general fiber of $f$ and get a g-klt pair. If $\dim Z''>0$, we obtain the result by induction on dimensions. Otherwise, we get a Picard number $1$ variety and $c\in K_n(I)$ by definition. We emphasize that in the above process, we only use actions (a) (taking a higher log resolution $W$ dominating $X'$ and $X''$, and pullback to $W$) and (b), and hence the induction hypothesis is preserved. If $\lfloor B' \rfloor \neq 0$, we do the same thing when $\dim Z''>0$. When $\dim Z''=0$, if $\lfloor B' \rfloor$ is not contracted in the above MMP, we do adjunction on an irreducible component of $\lfloor B' \rfloor$ and obtain the claim by induction on dimensions. This step uses action (c). If a component $S'$ of $\lfloor B' \rfloor$ is contracted in the above MMP, one can show that $M'|_{S'} \not\equiv 0$ at this step, and the adjunction on $S'$ gives the desired claim by induction again. This step also uses action (c).
\end{proof}

Next, we claim that $\lim^1 \PET_n(I) \subseteq \lim^1N_n(I)$ (see \cite[Proposition 3.4]{HL18b}).

\begin{proof}[Proof of the Claim] 
Suppose that there exists a sequence of lc pairs $({X_i}, \Delta_i)$ and ample Cartier divisors $M_i$ such that $c_i =\pet(X_i, \Delta_i; M_i)$ with $\lim c_i =c$ the accumulation point. After taking a dlt modification, we choose $0<\epsilon_i \ll 1$ and $A_i$ an ample divisor, such that $(X_i, \tilde \Delta_i)$ is klt where $\tilde \Delta_i\sim_\Rr \Delta_i + \epsilon_iA_i$. The new $c_i' =\pet(X_i, \tilde \Delta_i; M_i)$ has the same accumulation point $c$. We run an MMP for the g-klt pair $(X_i, \tilde \Delta_i+c_i'M_i)$. The resulting model gives a fibration $Y_i \to Z_i$. If $\dim Z_i>0$, we take a general fiber $F_i$. Consider $\tau_i = \pet(F_i, \Delta_{Y_i}|_{F_i}; M_{Y_i}|_{F_i})$ and the generalized log canonical threshold $\tau_i'$ of  $M_{Y_i}|_{F_i}$ with respect to $(F_i, \Delta_{Y_i}|_{F_i})$. If $\tau_i' \geq \tau_i$, we are done by induction on dimensions because $\lim \tau_i =c$. Otherwise, there exists a generalized lc place $S_i$. We can first extract $S_i$ and then do adjunction on $S_i$ and finally restrict to the general fiber of $S_i$ mapping to the normalization of its generalized lc center. An induction on dimensions establishes the claim. Notice that even though the pseudo-effective thresholds may change in each step, its limit is the same. Moreover, we emphasize that the above process only uses the actions (a) (b) and (c). If $\dim Z_i =0$, then we run a $(K_{Y_i}+\Delta_{Y_i})$-MMP, and it terminates to a Mori fiber space $W_i \to Z'_i$. A similar argument as above shows the claim.
\end{proof}

Finally, we claim that if $\{c_i \mid c_i \in N_n(I)\}$ has an accumulation point $c$, then $c \in N_{n-1}(I)$ (see \cite[Proposition 3.6]{HL18b}).

\begin{proof}[Proof of the Claim] 
By the first claim, we can assume that there exists Fano varieties $X'_i$ such that $K_{X'_i}+\Delta_i'+c_i M_i' \equiv 0$ satisfying condition $(\dagger)$. After several reduction steps, we come to the main observation that $\{X_i'\}$ cannot be $\epsilon$-lc for any $\epsilon>0$. Otherwise, the BAB conjecture proved by Birkar \cite{Bir16b} would imply the boundedness of $\{X_i'\}$ and thus $\{c_i\}$ cannot have an accumulation point. Then we can extract a divisor $A'_i$ whose generalized log discrepancy is at most $\epsilon$, and assume $K_{X'_i}+A'_i+\Delta'_i+c_i M'_i \equiv 0$. Run a $(K_{X'_i}+\Delta'_i+c_i M'_i)$-MMP, we get a Mori fiber space and by the induction on dimensions, we can assume that the base is a point. Hence we can still assume that $\rho(X'_i)=1$. If $\lfloor A'_i \rfloor \neq 0$, we do adjunction on an irreducible component of $\lfloor A'_i \rfloor$ and finish the proof by induction on dimensions again. Otherwise, set $T_i' = \Supp A'_i$. By ACC for generalized lc thresholds (\cite[Theorem 1.5]{BZ16}), we can assume that $(X'_i, T'_i+\Delta'_i+cM'_i)$ is g-lc. Let $c_i'$ be the generalized log canonical threshold of $M'_i$ with respect to $(X'_i, T'_i+\Delta'_i)$. If $c_i'<c_i$, then there exists a generalized lc place $S_i'$ which intersects the preimage of $M'_i$ non-trivially. Then do adjunction on $S_i'$ and restrict to a general fiber over the normalization of its generalized lc center, we finish the proof by induction on dimensions. Hence, we can assume that $(X'_i, T'_i+\Delta'_i+c_iM'_i)$ is g-lc. If $K_{X_i'}+T'_i+\Delta'_i+cM'_i \equiv 0$, we do adjunction on an irreducible component of $T_i'$ and obtain the result by induction on dimensions. Otherwise $K_{X_i'}+T'_i+\Delta'_i+cM'_i$ can only be anti-ample for infinite $i$ (the ample case would contradict the global ACC for generalized lc pairs  (\cite[Theorem 1.6]{BZ16})). Suppose that $c_i''$ satisfies $K_{X_i'}+T'_i+\Delta'_i+c_i''M'_i \equiv 0$, then $c<c_i'' \leq c_i$ and $\lim c''_i = c$. By adjunction on an irreducible component of $T_i'$, we complete the proof. Notice that the above process only uses actions (a) (b) and (c).
\end{proof}

Put the above together and use  the third claim repetitively, we have
\[
\lim{^k} \PET_n(I) \subseteq \lim{^k} N_n(I) \subseteq N_{n-k}(I) = K_{n-k}(I).
\]
\end{proof}

\begin{remark}
One may wonder where do the conditions $(\dagger) (2)$ and $(5)$ come from. In fact, these are the results of an adjunction on an irreducible component $S'$ of $\lfloor B' \rfloor$ for the g-lc pair $(X', B'+cM')$. Suppose that $f: X \to X'$ is a sufficiently high log resolution, viewing it as a generalized polarized pair with data $f$ and $M$, and $S$ is the strict transform of $S'$. Assume that $B\in I$, if $M'|_{S'} \not\equiv 0$, but $M|_S \equiv 0$, then in the (generalized) adjunction $(K_{X'}+B'+cM')|_{S'} = K_{S'}+B_{S'}+cM_{S'}$, at least one coefficient of $B_{S'}$ lies in $D_c(I)$. Indeed, we have $f^*M' = M+E$ with $E \geq 0$ an $f$-exceptional divisor. Then $f_{S}^*(M'|_{S'})=(f^*M')|_S = M|_S+E|_S$ and $M|_S\equiv 0$ imply that $0 \not\equiv M'|_{S'} \equiv {f_{S}}_*(E|_S)$. Because $B_{S'} \coloneqq {f_{S}}_*((B-S)|_S+cE|_S)$, we have the claim. For details, see \cite[Lemma 3.2]{HL18b}.
\end{remark}

The proof of the following proposition essentially uses the same idea of \cite[Corollary 1.6]{Sho85}. Such result should be well-known in the literature in variant forms (cf. \cite{Hor10}). 

\begin{proposition}\label{prop: bound on Fano index}
Let $(X', B'+M')$ be a generalized polarized pair such that $B'$ is the boundary part. Suppose that $M'$ is the push-forward of a nef and big Cartier divisor $M$ and let $\tau = \pet(K_{X'}+B', M')$, then $\tau \leq \dim X+1$.
\end{proposition}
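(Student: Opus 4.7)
The plan is to reduce, by standard manipulations, to a Shokurov-style non-vanishing statement on a smooth birational model, and then prove that statement inductively using adjunction and Kawamata--Viehweg vanishing.

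\textbf{Reductions.} Since $B'\geq 0$, any effective $\Rr$-representative of $K_{X'}+tM'$ yields one of $K_{X'}+B'+tM'$ by adding $B'$, so $\pet(K_{X'}+B';M')\leq \pet(K_{X'};M')$, and I may assume $B'=0$. Using the birational morphism $f\colon X\to X'$ from the data of the generalized polarized pair (with $X$ smooth, after replacing by a log resolution), $K_{X'}=f_*K_X$ and $M'=f_*M$, hence $K_{X'}+tM'=f_*(K_X+tM)$. Push-forward preserves effectivity, so $\pet(K_{X'};M')\leq \pet(K_X;M)$. Setting $n=\dim X$, the problem reduces to showing $\pet(K_X;M)\leq n+1$ when $X$ is smooth projective of dimension $n$ and $M$ is nef and big Cartier on $X$.

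\textbf{Inductive non-vanishing.} To prove $\pet(K_X;M)\leq n+1$ it suffices to show $K_X+(n+1)M$ is pseudo-effective, because for any $t>n+1$ the divisor $K_X+tM$ is then pseff $+$ big $=$ big and so has an effective $\Rr$-representative. Proceed by induction on $n$; the base $n=0$ is trivial. For $n\geq 1$, bigness of $M$ gives $|aM|\neq\emptyset$ for some $a\geq 1$, and (after passing to a log resolution if necessary) we pick a smooth irreducible $D\in|aM|$ of dimension $n-1$. Adjunction gives $K_D=(K_X+aM)|_D$, so $(K_X+sM)|_D=K_D+(s-a)M|_D$. By the inductive hypothesis applied to $(D,M|_D)$, $\pet(K_D;M|_D)\leq n$, so $K_D+(s-a)M|_D$ is effective for $s-a>n$. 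The short exact sequence
\[
0\to K_X+(s-a)M\to K_X+sM\to K_D+(s-a)M|_D\to 0,
\]
combined with Kawamata--Viehweg vanishing $H^1(X,K_X+(s-a)M)=0$ (valid since $(s-a)M$ is nef and big for $s>a$), lifts effective representatives from $D$ back to $X$, showing $K_X+sM$ is effective for all $s>n+a$.

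\textbf{Main obstacle and fix.} Tracking the constant, the above induction only yields $\pet(K_X;M)\leq n+a$, not the sharp bound $n+1$. The sharp bound requires $a=1$, i.e.\ $|M|\neq\emptyset$, which can fail for nef and big Cartier $M$ (bigness only guarantees sections of some positive multiple of $M$). The remedy---presumably the content of Chen Jiang's simplification acknowledged in the paper---is a perturbation: for an ample Cartier divisor $A$ and small $\epsilon>0$, $M+\epsilon A$ is ample, so a sufficiently positive Cartier multiple is base-point free by Kawamata--Shokurov and Bertini supplies a smooth member, allowing the inductive argument to run with $a=1$ on the perturbation. Passing to the limit $\epsilon\to 0^+$ and invoking the closedness of the pseudo-effective cone in $N^1(X)_\Rr$ yields pseudo-effectivity of $K_X+(n+1)M$, completing the proof.
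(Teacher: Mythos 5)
Your reductions (dropping $B'$ and passing to the smooth model $X$) are fine and match the paper's first step, but the core of your argument has gaps that I don't see how to close. First, the inductive lifting step needs an actual section of $K_D+(s-a)M|_D$, whereas your inductive hypothesis only supplies pseudo-effectivity of $K_D+tM|_D$ (that is what $\pet\leq n$ means); the restriction map on $H^0$ cannot lift a pseudo-effective class, so the induction as stated proves nothing without upgrading the hypothesis to an effective non-vanishing statement, which is a much harder (and in this generality open) problem. Second, the existence of a smooth irreducible $D\in|aM|$ is not justified: bigness only gives a possibly singular, reducible member, and passing to a log resolution changes $aM$ into a free part plus a fixed part, breaking the adjunction formula you use. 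Third, and most seriously, the fix you propose for the constant does not work: $M+\epsilon A$ is ample but not Cartier, so the inductive argument cannot run ``with $a=1$'' on it --- you would have to replace it by a Cartier multiple $m(M+\epsilon A)$, which only yields $\pet(K_X;M+\epsilon A)\leq m(n+1)$. Indeed the bound $\dim X+1$ is not scale-invariant and is simply false for small ample $\Rr$-divisors, so no perturbation argument that forgets the Cartier condition can succeed. You correctly identified the constant as the main obstacle, but the obstacle remains.

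The paper's proof is entirely different and avoids all of this: assuming $\tau>n+1$, it uses Kawamata--Viehweg vanishing to show $H^i(X,kM)=0$ for $i<n$ and $k<0$, and Serre duality plus the non-pseudo-effectivity of $K_X-kM$ for $-k\leq n+1$ to show $H^n(X,kM)=0$ for $k=-1,\dots,-(n+1)$. Hence the Euler characteristic $\chi(kM)$, a polynomial of degree exactly $n$ in $k$ (leading coefficient $M^n/n!>0$ since $M$ is nef and big), vanishes at $n+1$ integers --- a contradiction. This Riemann--Roch root-counting argument is what produces the sharp constant $n+1$ with no need for smooth members of $|M|$ or for non-vanishing on a divisor; I would recommend abandoning the adjunction induction in favour of it.
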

\begin{proof}
By taking a sufficiently high log resolution of $(X', B'+M')$ and replacing $M$ with its pullback. We can assume that there exists a birational morphism $f: X \to X'$ from a smooth variety $X$, and $M$ on $X$ is a nef and big Cartier divisor. Let $\dim X=n$, then the Euler character $\chi(kM)=\sum_{i=0}^{n} (-1)^i h^i(X, kM)$ is a polynomial in $k \in \Zz$ with leading term $\frac{M^n}{n!}k^n$. In particular, $\chi(kM) \neq 0$.

\medskip

Suppose that $\tau > n+1$. For $k \in \Zz_{<0}$ and $i<n$, we have
\begin{equation}\label{eq: 1}
H^i(X, kM)=H^{n-i}(X, K_X-kM)^\vee = 0
\end{equation} by Kawamata-Viehweg vanishing theorem. When $i=n$, $H^n(X, kM)= H^0(X, K_X-kM)^\vee$. If $K_X-kM$ is pseudo-effective, so are $f_*(K_X-kM) = K_{X'}-kM'$ and $K_{X'}+B'-kM'$ by $B' \geq 0$. Hence $K_X-kM$ is not pseudo-effective for $-k<\tau$. In particular, $H^n(X, kM)=0$ for $k\geq -(n+1)>-\tau$. This shows that $\chi(kM)=0$ for $n+1$ integers $k =-(n+1), \ldots, -1$. This is a contradiction to $\chi(kM) \neq 0$.
\end{proof}

\begin{remark}
The above result does not need $(X', B'+\tau M')$ to be g-lc.
\end{remark}

Recall that a proper morphism between normal varieties $g : Y \to Z$ is called a fibration if $f_*\Oo_Y=\Oo_Z$. 

\begin{proposition}\label{prop: num Iitaka}
Suppose that $Y, Z$ are normal projective varieties with $\dim Z>0$ and $g: Y \to Z$ is a fibration. Let $M_Z$ be a big $\Rr$-Cartier divisor on $Z$ and $M = g^*M_Z$. Suppose that $(Y, B)$ is a klt pair such that for a general point $z\in Z$, $(K_Y+B)|_{Y_z}$ is pseudo-effective. If $\tau \coloneqq \pet(Y, B; M)>0$, then $\pet(\tilde Z, 0; \tilde M_Z) \geq \tau$, where $\tilde Z \to Z$ is any resolution of $Z$ and $M_{\tilde Z}$ is the pullback of $M_Z$.  
\end{proposition}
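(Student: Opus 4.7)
The plan is to reduce the statement to a smooth birational model $\tilde g : \tilde Y \to \tilde Z$ of $g : Y \to Z$ and then apply Nakayama's subadditivity (Theorem \ref{thm: Nakayama}, inequality \eqref{eq: kodaia 1}) to the decomposition
\[
K_{\tilde Y}+\tilde B+t M_{\tilde Y}=(K_{\tilde Y/\tilde Z}+\tilde B)+\tilde g^*(K_{\tilde Z}+t M_{\tilde Z}).
\]
The conceptual core is that the hypothesis on $(K_Y+B)|_{Y_z}$ pins down the relative numerical dimension of $D=K_{\tilde Y/\tilde Z}+\tilde B$, while bigness of $M_Z$ will promote pseudo-effectivity of $K_{\tilde Z}+t M_{\tilde Z}$ on the base to an actual inequality $\kappa(K_{\tilde Z}+t M_{\tilde Z})\geq 0$ as soon as $t$ strictly exceeds $\pet(\tilde Z,0;M_{\tilde Z})$; this is what feeds the right-hand side of \eqref{eq: kodaia 1}.

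Concretely, I would first fix the given resolution $q:\tilde Z\to Z$ and choose a log resolution $p:\tilde Y\to Y$ from a smooth $\tilde Y$ such that the induced rational map $\tilde Y\dashrightarrow \tilde Z$ is a morphism (using Stein factorization if necessary, so that $\tilde g$ is a fibration). Write $K_{\tilde Y}+\tilde B=p^*(K_Y+B)+E$ with $\tilde B\geq 0$, $E\geq 0$ $p$-exceptional, and $(\tilde Y,\tilde B)$ klt (possible because $(Y,B)$ is klt). Note $M_{\tilde Y}\coloneqq p^*M=\tilde g^*M_{\tilde Z}$, and that pseudo-effectivity of $K_Y+B+tM$ is equivalent to that of $K_{\tilde Y}+\tilde B+tM_{\tilde Y}$ (pushforward by $p_*$ together with $E\geq 0$ handles both directions). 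Moreover $\pet(\tilde Z,0;M_{\tilde Z})$ is unchanged by further blow-ups of $\tilde Z$ since $K_{\tilde Z}$ then picks up only effective exceptional contributions, so it suffices to prove the claim on this single smooth model.

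Set $t_0\coloneqq \pet(\tilde Z,0;M_{\tilde Z})$ and fix any $t>t_0$. Since $M_{\tilde Z}$ is big, writing $M_{\tilde Z}\equiv A+F$ with $A$ ample and $F\geq 0$ gives
\[
K_{\tilde Z}+t M_{\tilde Z}\equiv (K_{\tilde Z}+t_0 M_{\tilde Z})+(t-t_0)A+(t-t_0)F,
\]
a sum of a pseudo-effective, an ample and an effective divisor, hence big; in particular $\kappa(K_{\tilde Z}+t M_{\tilde Z})=\dim\tilde Z\geq 0$. I would then apply Theorem \ref{thm: Nakayama} with $D\coloneqq K_{\tilde Y/\tilde Z}+\tilde B$, $\Delta\coloneqq \tilde B$ and $Q\coloneqq K_{\tilde Z}+t M_{\tilde Z}$: the nefness hypothesis $D-(K_{\tilde Y/\tilde Z}+\Delta)=0$ is trivial, $(\tilde Y,\tilde B)$ is klt hence lc, and $D+\tilde g^* Q=K_{\tilde Y}+\tilde B+t M_{\tilde Y}$. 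Inequality \eqref{eq: kodaia 1} yields
\[
\kappa_\sigma(K_{\tilde Y}+\tilde B+t M_{\tilde Y})\geq \kappa_\sigma(D;\tilde Y/\tilde Z)+\kappa(Q)\geq 0,
\]
where $\kappa_\sigma(D;\tilde Y/\tilde Z)\geq 0$ follows from Lemma \ref{le: pseudo-effective} together with the hypothesis that $(K_Y+B)|_{Y_{q(z)}}$ is pseudo-effective and the birationality of $p|_{\tilde Y_z}:\tilde Y_z\to Y_{q(z)}$ for very general $z$ (so that $E|_{\tilde Y_z}$ is exceptional over $Y_{q(z)}$, and pseudo-effectivity transfers). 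Thus $K_{\tilde Y}+\tilde B+t M_{\tilde Y}$ is pseudo-effective for every $t>t_0$, and I conclude $\tau\leq t_0$.

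The main conceptual obstacle is choosing which of the two Nakayama inequalities to invoke, and hence identifying the role of bigness of $M_Z$: inequality \eqref{eq: kodaia 1} demands the ordinary Kodaira dimension $\kappa(Q)\geq 0$ of the base divisor, not merely $\kappa_\sigma(Q)\geq 0$, so one must exploit bigness of $M_Z$ to deduce bigness (hence $\kappa\geq 0$) of $K_{\tilde Z}+t M_{\tilde Z}$ above the threshold; trying instead \eqref{eq: kodaia 2} would require the relative $\kappa(D;\tilde Y/\tilde Z)\geq 0$, which the hypothesis does not supply. The remaining work is birational bookkeeping to ensure that pseudo-effectivity on a general fiber transfers cleanly between $Y_{q(z)}$ and $\tilde Y_z$.
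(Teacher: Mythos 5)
Your proposal is correct and follows essentially the same route as the paper: the same reduction to a smooth fibration $\tilde g:\tilde Y\to\tilde Z$, the same application of \eqref{eq: kodaia 1} with $D=K_{\tilde Y/\tilde Z}+\tilde B$ and $Q=K_{\tilde Z}+tM_{\tilde Z}$, and the same use of bigness of $M_{\tilde Z}$ to get $\kappa(Q)=\dim\tilde Z$ just above the base threshold. The only difference is presentational — you argue directly that $K_{\tilde Y}+\tilde B+tM_{\tilde Y}$ is pseudo-effective for every $t>t_0$, whereas the paper runs the same inequality as a proof by contradiction.
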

\begin{proof}
Let $p: \tilde Z \to Z$ be any resolution and $q: \tilde Y \to Y$ be a log resolution of $(Y, B)$ such that $\tilde g: \tilde Y \to \tilde Z$ is a morphism satisfying $g \circ q= p \circ \tilde g$. Because $Z$ and $\tilde Z$ are isomorphism over a non-empty open set, $\tilde g$ has connected fibers over the generic point of $\tilde Z$. Let $\tilde Y \to \tilde Y_{\rm Stein} \to \tilde Z$ be the Stein factorization. Then $\tilde Y_{\rm Stein} \to \tilde Z$ is a finite morphism which is an isomorphism over the generic point. As $\tilde Z$ is normal, $\tilde Z = \tilde Y_{\rm Stein}$. Hence $\tilde Y \to \tilde Z$ is a fibration.

\medskip

As $\tilde M_Z$ is big, $\tau_0 \coloneqq \pet(\tilde Z, 0; \tilde M_Z) \geq 0$. Hence, if $\tau_0  < \tau$, there exists $\tau_0'>0$, such that $0 \leq \tau_0 < \tau_0' < \tau$. Then $K_Y+B+\tau'_0 M$ is not pseudo-effective. Set
\begin{equation}\label{eq: resolution}
K_{\tilde Y} + \tilde B_{\tilde Y} + \tau'_0 M_{\tilde Y} = q^*(K_Y+B+\tau'_0 M)+ E,
\end{equation} where $M_{\tilde Y} = q^*M$, and $E \geq 0$ is a $q$-exceptional divisor which does not contain any component of $\tilde B_{\tilde Y}$. Thus $(\tilde Y, \tilde B_{\tilde Y})$ is also klt. Besides, $K_{\tilde Y} + \tilde B_{\tilde Y} + \tau'_0 M_{\tilde Y}$ is not pseudo-effective, otherwise, 
\[
q_*(q^*(K_Y+B+\tau'_0 M)+ E) = K_Y+B+\tau'_0 M
\] is also pseudo-effective. Apply \eqref{eq: kodaia 1} in Theorem \ref{thm: Nakayama}  to $\tilde g: \tilde Y \to \tilde Z$ with $\Delta = \tilde B_{\tilde Y}$, $D=K_{\tilde Y/ \tilde Z}+\tilde B_{\tilde Y}$ and $Q=K_{\tilde Z}+\tau'_0 M_{\tilde Z}=K_{\tilde Z}+\tau'_0 p^*M_Z$. We have
\begin{equation}\label{eq: inequality}
\kappa_\sigma(K_{\tilde Y}+\tilde B_{\tilde Y}+\tau'_0 M_{\tilde Y}) \geq \kappa_\sigma((K_{\tilde Y}+\tilde B_{\tilde Y})|_{\tilde F})+ \kappa(K_{\tilde Z}+\tau'_0 M_{\tilde Z}),
\end{equation} where $\tilde F=\tilde Y_{z}$ is a general fiber.

\medskip

As $F = Y_z$ is also a general fiber of $g: Y \to Z$, and by \eqref{eq: resolution}, we have
\[
(K_{\tilde Y}+\tilde B_{\tilde Y})|_{\tilde F} = q_F^*((K_Y+B)|_{F})+E|_{\tilde F},
\] where $q_F: \tilde F \to F$ is the restriction of $q$ to $\tilde F$. By assumption, $(K_Y+B)|_{F}$ is pseudo-effective, and by above, $(K_{\tilde Y}+\tilde B_{\tilde Y})|_{\tilde F}$ is also pseudo-effective. By Lemma \ref{le: pseudo-effective}, we have $\kappa_{\sigma}(K_{\tilde Y}+\tilde B_{\tilde Y}+\tau'_0 M_{\tilde M}) = -\infty$ and $\kappa_{\sigma}((K_{\tilde Y}+\tilde B_{\tilde Y})|_{\tilde F}) \geq 0$.  Hence $\kappa(K_{\tilde Z}+\tau'_0 M_{\tilde Z}) = -\infty$ by \eqref{eq: inequality}. This is a contradiction because $M_{\tilde Z}$ is big and thus $\kappa(K_{\tilde Z}+\tau'_0 M_{\tilde Z}) = \kappa(K_{\tilde Z}+\tau_0 M_{\tilde Z}+(\tau_0'-\tau_0)M_{\tilde Z})= \dim \tilde Z > 0$.
\end{proof}

\begin{proof}[Proof of Theorem \ref{thm: main}] We divide the argument in several steps.

\medskip

\noindent Step 1. By adding $1$ to $I$, we can assume that $1 \in I$. Next, we claim that after replacing $I$ by $I_+$ (see \eqref{eq: I_+}), we can assume that $I=I_+$. In particular, the assumptions of Theorem \ref{thm: accumulation point of pet} is satisfied. Notice that this can only enlarge the upper bound. 

\begin{proof}[Proof of the Claim] If $I \subseteq [0,1]$ is DCC, then $I_+$ is also DCC. Moreover, $(I_+)_+ = I_+$. If $1$ is the only possible accumulation point of $I$, we claim that $1$ is
also the only possible accumulation point of $I_+$. Otherwise, there exists a sequence $\{c_+^{i}\}_{i\in \Nn}$ of $I_+$ approaching $c_+<1$. Each $c_+^i = \sum_{j}^{n_i} a_{ij}$, where $a_{ij} \in I$ (repetition is allowed). We claim that $n_i$ is bounded above. Otherwise there exists a subsequence $\{a_{k_i j_{k_i}}\}$ which decreases to $0$. By passing to a subsequence, we can assume that $n_i=n$ is a fixed number. For each $c_+^i$, we can associate it with an $n$-tuple $(a_{i1}, \ldots, a_{in})$ (the order does not matter). By passing to a subsequence again, we can assume that for each $k$, $\{a_{ik}\}_{i\in \Nn}$ is an increasing sequence. Hence for some $k$, there exists an accumulation point $\lim_i a_{ik} = a_k$, and $0<a_k<1$. This is a contradiction because $1$ is the only possible accumulation point in $I$.  
\end{proof}

If $k=n$, then $\lim^n (\PET_n(I)) \leq 1$ by \cite[Proposition 3.1]{HL18b}. Hence, we can assume $0<k<n$. By Theorem \ref{thm: accumulation point of pet}, we have $\lim^k (\PET_n(I)) \subseteq K_{n-k}(I)$.  Hence it is enough to give an upper bound for $K_{n-k}(I)$. Suppose that $0<\tau \in K_{n-k}(I)$, then by definition \eqref{eq: sets}, there exists $X'$ with $\rho(X')=1$, $\dim X' \leq n-k$, and $B', M'$ such that $({X'}, B'+\tau M')$ is g-klt satisfying condition $(\dagger)$. In this situation, we will show $\tau \leq \dim X'+1$. There are two cases to consider: either $M' \equiv 0$ or $M' \not\equiv 0$.

\medskip

\noindent Step 2. If $M' \equiv 0$, then some coefficient of $B'$ lies in $D_{\tau}(I)$. This coefficient is of the form 
\[
\frac{m-1+f+k\tau}{m}, \quad m,k\in \Nn \text{~and~} f\in I_+.
\] By generalized klt assumption, all the coefficients of $B'$ are less than $1$, hence $k\tau<1$, and thus $\tau<1$. The claim holds automatically.

\medskip

\noindent Step 3. If $M' \not\equiv 0$, then $M'$ is ample and $\Qq$-Cartier. By the definition of $\mathfrak{K}_n(I,c)$, there exists a log resolution $f: X \to X'$ and a semi-ample Cartier divisor $M$, such that $f_*(M) = M'$. Hence $f^*M' = M+E$ with $E \geq 0$ an $f$-exceptional divisor by the negativity lemma. Because $M' =f_*M \not\equiv 0$, $M$ is not numerically trivial. Hence some positive multiple of $M$ induces a fibration $\phi_M: X \to Z$ such that $M =\phi_M^*M_Z$ and $\dim Z>0$. By condition $(\dagger)$ (4), $M_Z$ is an ample \emph{Cartier} divisor. If $\phi_M$ is a birational morphism, that is, $M$ is nef and big, by Proposition \ref{prop: bound on Fano index}, we have $c\leq n-k+1$. Thus, we can assume that $0<\dim Z < \dim X$.

\medskip

We can write 
\begin{equation}\label{eq: on X}
K_X+\tilde B +\tau M =f^*(K_{X'}+B'+\tau M')+E,
\end{equation} where 
\[
\tilde B = f_*^{-1} B' + \sum_{a(F,X',B' + M')<1} (1-a(F,X',B' + M'))F
\] is a summation of the birational transform of $B'$ and $f$-exceptional divisors whose generalized log discrepancies are less than $1$. Thus $E \geq 0$ is an $f$-exceptional divisor such that $\tilde B, E$ have no common components. Because $({X'}, B'+\tau M')$ is g-klt, $(X, \tilde B +\tau M)$ is also g-klt with data $X \xrightarrow{\rm id} X \to \spec \Cc$, where $\tilde B$ is the boundary part and $\tau M$ is the nef part. In particular, $(X, \tilde B)$ is klt. By $K_{X'}+B'+\tau M' \equiv 0$, we have
\begin{equation}\label{eq: full equation 2}
K_X+\tilde B +\tau M \equiv E \geq 0.
\end{equation}

\medskip

We have $\tau = \pet(X, \tilde B; M)$. In fact, if $\tau>\tau_0 = \pet(X, \tilde B; M)$, then by $f_*\tilde B = B'$, $f_*(K_X+\tilde B +\tau_0 M) = K_{X'}+ B' + \tau_0 M'$ is pseudo-effective, which is a contradiction. Let $X_z$ be a general fiber of $X \to Z$, then
\[
(K_X+\tilde B)|_{X_z} \equiv (K_X+\tilde B +\tau M)|_{X_z} \equiv E|_{X_z}
\] is also pseudo-effective. By Proposition \ref{prop: num Iitaka}, we have $\tau \leq \pet(\tilde Z, 0; M_{\tilde Z})$, where $\tilde Z \to Z$ is a resolution of $Z$ and $M_{\tilde Z}$ is the pullback of $M_Z$. Because $M_{\tilde Z}$ is a nef and big Cartier divisor, by Proposition \ref{prop: bound on Fano index}, we have 
\[
\tau \leq \pet(\tilde Z,0; M_{\tilde Z}) \leq \dim \tilde Z+1 \leq \dim X' \leq n-k.
\] This completes the proof.
\end{proof}

\begin{proof}[Proof of Theorem \ref{thm: Fujita}]
By choosing $I = \{1\}$ in Theorem \ref{thm: main}, we get Theorem \ref{thm: Fujita}.
\end{proof}

We show by the following example that the upper bound in Theorem \ref{thm: main} is sharp for any $n$ and $0<k \leq n$.

\begin{example}\label{eg: the bound is sharp}
Let $I = \{\frac{l-1}{l} \mid l \in \Nn\}$. For $i\in \Nn$, consider the lc pair
\[
(\Pp^n, a^i_0 H_0+\cdots+a^i_{n}H_n),
\] where $a^i_j = \frac{l^i_j-1}{l^i_j} \in I$ and $H_j \in |\Oo_{\Pp^n}(1)|$ is a general divisor. Let $M \in |\Oo_{\Pp^n}(1)|$ be a Cartier divisor. Then 
\[
\pet(\Pp^n, a^i_0 H_0+\cdots+a^i_{n}H_n; M)=\sum_{j=0}^n \frac{1}{l^i_j}.
\] We have
\[
\{\sum_{j=0}^{n-k} \frac{1}{l^i_j} \mid l^i_j \in \Nn\} \subseteq \lim{}^k\PET(I),
\] and hence the upper bound $n-k+1$ is sharp.
\end{example}

\begin{remark}
Even though the bound in Theorem \ref{thm: main} is sharp (i.e. for coefficients in a DCC set), we do not know whether the bound in Theorem \ref{thm: Fujita} is sharp or not.
\end{remark}

\bibliographystyle{alpha}

\bibliography{bibfile}

\end{document}